\providecommand{\U}[1]{\protect\rule{.1in}{.1in}}
\providecommand{\U}[1]{\protect\rule{.1in}{.1in}}
\newtheorem{theo}{Theorem}
\newtheorem{coro}{Corollary}
\newtheorem{lemm}{Lemma}
\theoremstyle{remark}
\begin{document}
\title[On the connectedness of the set of Riemann surfaces...]{On the connectedness of the set of Riemann surfaces with real moduli}
\author{Antonio F. Costa}
\address{Departamento de Matem\'aticas Fundamentales, Facultad de Ciencias, UNED, 28040
Madrid, Spain}
\email{acosta@mat.uned.es}
\author{Rub\'en A. Hidalgo}
\address{Departamento de Matem\'atica y Estad\'{\i}stica, Universidad de La Frontera.
Temuco, Chile}
\email{ruben.hidalgo@ufrontera.cl}
\thanks{Partially supported by Project MTM2014-55812-P (Spanish Ministry of
Competitivity), Project of Fondecyt 1150003 and Project Anillo ACT1415 PIA CONICYT}

\begin{abstract}
The moduli space ${\mathcal{M}}_{g}$, of genus $g\geq2$ closed Riemann
surfaces, is a complex orbifold of dimension $3(g-1)$ which carries a natural
real structure i.e. it admits an anti-holomorphic involution $\sigma$. The
involution $\sigma$ maps each point corresponding to a Riemann surface $S$ to
its complex conjugate $\overline{S}$. The fixed point set of $\sigma$ consists
of the isomorphism classes of closed Riemann surfaces admitting an
anticonformal automorphism. Inside $\mathrm{Fix}(\sigma)$ is the locus
${\mathcal{M}}_{g}(\mathbb{R})$, the set of real Riemann surfaces, which is
known to be connected by results due to P. Buser, M. Sepp\"{a}l\"{a} and R.
Silhol. The complement $\mathrm{Fix}(\sigma)-{\mathcal{M}}_{g}(\mathbb{R})$
consists of the so called pseudo-real Riemann surfaces, which is known to be
non-connected. In this short note we provide a simple argument to observe that
$\mathrm{Fix}(\sigma)$ is connected.

\end{abstract}
\maketitle


\section{Introduction}

The moduli space ${\mathcal{M}}_{g}$, of genus $g\geq2$ closed Riemann
surfaces, is a complex orbifold of dimension $3(g-1)$. The study of this
moduli space was already started by F. Klein. This space carries a natural
real structure given by an involution $\sigma$ which sends each Riemann
surface to its complex conjugate. The fixed point set of $\sigma$ consists of
the isomorphism classes of closed Riemann surfaces admitting an anticonformal
automorphism and those surfaces are said to have real moduli. The quotient
space ${\mathcal{M}}_{g}/\left\langle \sigma\right\rangle $ is the moduli
space of Riemann surfaces of genus $g$ considered as Klein surfaces, i. e. two
surfaces are equivalent if they are holomorphic or anti-holomorphically
equivalent. Inside $\mathrm{Fix}(\sigma)$ is the locus ${\mathcal{M}}%
_{g}(\mathbb{R})$, consisting of those admitting an anticonformal involution
(Riemann surfaces corresponding to real algebraic curves), which is known to
be connected by results due to P. Buser, M. Sepp\"{a}l\"{a} and R. Silhol
\cite{BSS,Seppala} (see also a proof in \cite{CI}). The locus ${\mathcal{M}%
}_{g}(\mathbb{R})$ has been studied by Klein in \cite{K} and the fact that in
general $\mathrm{Fix}(\sigma)\neq{\mathcal{M}}_{g}(\mathbb{R})$ was noted by
C. Earle in \cite{E}. The complement ${\mathcal{P}}_{g}=\mathrm{Fix}%
(\sigma)-{\mathcal{M}}_{g}(\mathbb{R})$ consists of the so called pseudo-real
Riemann surfaces. In \cite{BCC} it was observed that ${\mathcal{P}}_{g}$ is
non-empty for every $g\geq2$ (then $\mathrm{Fix}(\sigma)\neq{\mathcal{M}}%
_{g}(\mathbb{R})$ for all $g\geq2$). In \cite{BC} it was observed that
${\mathcal{P}}_{2}$ and ${\mathcal{P}}_{3}$ are connected and ${\mathcal{P}%
}_{4}$ is non-connected (it has three connected components). There are
infinite many integers $n_{i}$ such that ${\mathcal{P}}_{n_{i}}$ is not
connected (this fact follows from Theorems 3.4, 5.4, 5.5 and Section 6 of
\cite{BCC} to construct families of pseudo-real surfaces and from the
Corollary of Theorem 2 of \cite{Si} to prove that these families are in
different connected components).

In this short note we provide a simple argument to observe that $\mathrm{Fix}%
(\sigma)$ is connected.

\begin{theo}
\label{conexo} The set $\mathrm{Fix}(\sigma)$ is connected.
\end{theo}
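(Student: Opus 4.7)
The plan is to deduce the connectedness of $\mathrm{Fix}(\sigma)$ from that of $\mathcal{M}_g(\mathbb{R})$ (Buser--Sepp\"al\"a--Silhol). Writing $\mathrm{Fix}(\sigma)=\mathcal{M}_g(\mathbb{R})\cup\mathcal{P}_g$, the subset $\mathcal{M}_g(\mathbb{R})$ is closed in $\mathrm{Fix}(\sigma)$, because the property of admitting an anticonformal involution is preserved under limits of moduli (the automorphism group of a surface of genus $\ge 2$ is finite, so a subsequential limit of anticonformal involutions is again an anticonformal involution). Hence every connected component $C$ of $\mathcal{P}_g$ is open in $\mathrm{Fix}(\sigma)$, and its boundary (inside $\mathrm{Fix}(\sigma)$) must lie in $\mathcal{M}_g(\mathbb{R})$. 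If $\partial C\neq\emptyset$ for every such $C$, then $\overline{C}\cup\mathcal{M}_g(\mathbb{R})$ is connected, and $\mathrm{Fix}(\sigma)$ is the union of these connected sets, hence connected. So it suffices to show that every pseudo-real Riemann surface is a limit, inside $\mathrm{Fix}(\sigma)$, of real Riemann surfaces.

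For $[X]\in\mathcal{P}_g$, fix an anticonformal automorphism $\tau$ of order $2n$. As $X$ has no anticonformal involution, $\tau^n$ must be conformal, which forces $n$ to be even. Set $T=\tau^n$, a nontrivial conformal involution of $X$, and $Y=X/\langle T\rangle$; since $\tau$ commutes with $T$, it descends to an anticonformal involution $\bar\tau$ on $Y$, so $[Y]\in\mathcal{M}_h(\mathbb{R})$ for some $h<g$. The cover $X\to Y$ is a double cover ramified over the $\bar\tau$-invariant divisor $B\subset Y$ consisting of the images of $\mathrm{Fix}(T)$; since $\tau$ must be fixed-point free (a fixed point of $\tau$ would force $\tau$ to have order $2$), the involution $\bar\tau$ has no fixed point in $B$ and permutes it freely in pairs.

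I would then produce the limit surface inside the connected equisymmetric stratum $\mathcal{M}_g^{[\tau]}\subset\mathrm{Fix}(\sigma)$ of Riemann surfaces carrying an anticonformal automorphism of the same topological type as $\tau$. Using the connectedness of $\mathcal{M}_h(\mathbb{R})$, the pair $(Y,\bar\tau)$ is joined by a continuous path to a symmetric specialization $(Y_0,\bar\tau_0)$ admitting a second anticonformal involution $\bar\sigma$ commuting with $\bar\tau_0$, while the branch data $B$ is moved in a $\bar\tau$-equivariant manner to a $\langle\bar\tau_0,\bar\sigma\rangle$-invariant configuration. The corresponding double cover $X_0$ then inherits, via a suitable lift of $\bar\sigma$, an anticonformal involution; thus $[X_0]\in\mathcal{M}_g^{[\tau]}\cap\mathcal{M}_g(\mathbb{R})$ furnishes the required boundary point of $C$.

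The principal obstacle is to guarantee that such a specialization exists for every topological type of pseudo-real action (a statement at the level of NEC-group signatures) and that the deformation can be carried out continuously without the branch points colliding, which would force the path to leave $\mathcal{M}_g$ for its Deligne--Mumford boundary. Handling this requires a combinatorial analysis of the admissible NEC signatures in the spirit of Macbeath, Singerman, and Bujalance--Etayo--Gromadzki, and this is what I expect the short \emph{simple argument} of the authors to formalize.
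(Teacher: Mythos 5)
Your second half lands on the paper's actual strategy: work inside the connected equisymmetric stratum ${\mathcal{O}}(S,\tau)$ of surfaces carrying an anticonformal automorphism topologically equivalent to $\tau$ (connectedness of this locus is quoted from Bujalance--Costa), find in it a surface that also admits an anticonformal involution, and then invoke the connectedness of ${\mathcal{M}}_g(\mathbb{R})$. But there are two problems. First, your reduction in the opening paragraph is backwards and in fact inconsistent with your own observation: since ${\mathcal{M}}_g(\mathbb{R})$ is closed in $\mathrm{Fix}(\sigma)$, a pseudo-real surface can never be a limit of real surfaces, so the statement you claim ``it suffices to show'' is false and unprovable. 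What is actually needed (and what the stratum argument gives directly, with no closures, boundaries, or local connectedness required) is that each connected set ${\mathcal{O}}(S,\tau)$ contains both $[S]$ and some point of ${\mathcal{M}}_g(\mathbb{R})$; then $\mathrm{Fix}(\sigma)$ is a union of connected sets each meeting the connected set ${\mathcal{M}}_g(\mathbb{R})$.

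Second, and more seriously, the existence of the required real surface in the stratum is precisely the content of the proof, and you defer it as ``the principal obstacle'' while the sketch you do give contains errors: on $Y=X/\langle \tau^n\rangle$ the automorphism $\tau$ descends to an anticonformal automorphism $\bar\tau$ of order $n$, not an involution (unless $n=2$), so $[Y]$ need not lie in ${\mathcal{M}}_h(\mathbb{R})$ and may itself be pseudo-real; and no argument is offered that a $\langle\bar\tau_0,\bar\sigma\rangle$-symmetric branch configuration exists for every topological type. The paper closes exactly this gap by a short, explicit NEC-group construction: from the signature $(\gamma;-;[n_1,\ldots,n_r])$ of $\Delta$ uniformizing $S/\langle\tau\rangle$ it builds an NEC group $K$ of signature $(0;+;[2,\overset{\gamma}{\ldots},2],\{(n_1,\ldots,n_r)\})$, which contains reflections and an index-two subgroup $\widehat{\Delta}\cong\Delta$ without reflections; a lemma then shows that for any homomorphism of $\widehat{\Delta}$ onto an abelian group (here $C_{2n}$) the kernel is normal in all of $K$, because the reflection $\tau_1$ conjugates the generators of $\widehat{\Delta}$ to their inverses and hence induces $a\mapsto a^{-1}$ on the abelian quotient. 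The reflections of $K$ therefore lift to anticonformal involutions of $\widehat{S}={\mathbb{D}}/\iota(\Gamma)$, giving the desired point of ${\mathcal{O}}(S,\tau)\cap{\mathcal{M}}_g(\mathbb{R})$. Without this (or an equivalent) construction your argument is incomplete.
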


This connectedness fact may not be a surprise, but it seems it has not been
noted in the existent literature.

\textbf{Acknowledgements}. We wish to thank the referee for corrections and suggestions.


\section{The set of Riemann surfaces with real moduli is connected}

In this section, we proceed to prove Theorem \ref{conexo}. Let $S$ be a closed
Riemann surface of genus $g\geq2$, admitting an anticonformal automorphism
$\tau$ of order $2n$, where $n\geq2$ is even. The quotient orbifold
${\mathcal{O}}=S/\langle\tau\rangle$ is homeomorphic to the connected sum of
some $\gamma$ real projective planes and has exactly $r$ cone points, say of
orders $n_{1},\ldots,n_{r}\in\{2,\ldots,n\}$, where each $n_{j}$ is a divisor
of $n$. This means that there is an NEC group $\Delta$, acting on the unit
disc ${\mathbb{D}}$, with signature $(\gamma;-;[n_{1},\ldots,n_{r}])$ and
there is a surjective homomorphism $\rho:\Delta\rightarrow C_{2n}=\langle
\tau\rangle$, whose kernel $\Gamma$ is a Fuchsian group uniformizing $S$ and
$\langle\tau\rangle$ is induced by $\Delta$, that is, ${\mathbb{D}}%
/\Gamma=S\rightarrow{\mathbb{D}}/\Delta=S/\langle\tau\rangle$.

The locus ${\mathcal{O}}(S,\tau)$ in moduli space ${\mathcal{M}}_{g}$
consisting of those (classes of) closed Riemann surfaces $\widehat{S}$
admitting an anticonformal automorphism $\widehat{\tau}$ of order $2n$ so
there is an orientation preserving homeomorphism $\phi:S\rightarrow
\widehat{S}$ conjugating $\tau$ to $\widehat{\tau}$ is connected \cite{BC}.
Now, since the locus ${\mathcal{M}}_{g}(\mathbb{R})$ is connected, in order to
check the connectivity of $\mathrm{Fix}(\sigma)$, we only need to find a point
$[\widehat{S}]\in{\mathcal{O}}(S,\tau)$ so that $\widehat{S}$ admits also an
anticonformal involution. In the NEC group setting, this is equivalent to
finding an NEC group $K$ containing reflections (i.e. the group $K$
uniformizes a bordered Klein surface) and a subgroup $\widehat{\Delta}$ of $K$
so that there is an isomorphism $\iota:\Delta\rightarrow\widehat{\Delta}$ with
$\iota(\Gamma)$ being a normal subgroup of $K$. Note that $\iota(\Gamma)$ is a
Fuchsian group with $\widehat{S}={\mathbb{D}}/\iota(\Gamma)$ a closed Riemann
surface which has an automorphism $\widehat{\tau}$ topologically equivalent to
$\tau$ ($\widehat{\tau}$ is an automorphism group of the cyclic covering
${\mathbb{D}}/\iota(\Gamma)=\widehat{S}\rightarrow{\mathbb{D}}/\widehat{\Delta
}$) and $\widehat{S}$ has anticonformal involutions too, produced by the
lifting of the reflections of $K$ to $\widehat{S}$ (note that $\widehat{S}$
has empty boundary but ${\mathbb{D}}/K$ is bordered). In this way we have
${\mathcal{O}}(S,\tau)\cap{\mathcal{M}}_{g}(\mathbb{R})\neq\varnothing$ for
every pseudo-real surface $S $; this implies $\mathrm{Fix}(\sigma)$ is connected.


\subsection{The construction of $K$}

Let $K$ be an NEC group uniformizing the closed disc, with $\gamma$ interior
cone points of order $2$ and $r$ cone points in its border of orders
$n_{1},\ldots,n_{r}$, that is, an NEC\ group of signature
$(0;+;[2,\overset{\gamma}{...},2],\{(n_{1},...,n_{r})\})$. A canonical
presentation for $K$ is as follows
\[
K=\langle x_{1},\ldots,x_{\gamma},e,\tau_{1},\ldots,\tau_{r+1}:x_{1}%
^{2}=\cdots=x_{\gamma}^{2}=\tau_{1}^{2}=\cdots=\tau_{r+1}^{2}=1,
\]%
\[
e^{-1}\tau_{r+1}e=\tau_{1},x_{\gamma}\cdots x_{2}x_{1}e=1,(\tau_{1}\tau
_{2})^{n_{1}}=\cdots=(\tau_{r}\tau_{r+1})^{n_{r}}=1\rangle,
\]
where the elements $x_{j}$ are elliptic transformations of order two, the
elements $\tau_{j}$ are reflections and $e$ is an hyperbolic or elliptic
element (see \cite{BEGG} and Figure \ref{fig:FD1} for a fundamental domain of
$K$).

\begin{figure}[ptb]
\centering
\includegraphics[width=3.0cm]{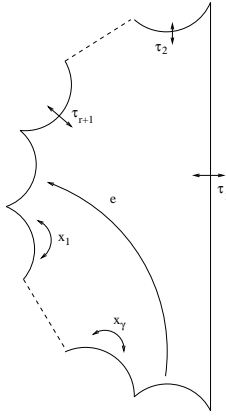}\caption{A fundamental domain for $K$}%
\label{fig:FD1}%
\end{figure}


\subsection{A subgroup $\protect\widehat{\Delta}$ of $K$}

Let us consider the surjective homomorphism
\[
\theta:K\rightarrow C_{2}=\langle a:a^{2}=1\rangle
\]%
\[
x_{1},\ldots,x_{\gamma},\tau_{1},\ldots,\tau_{r+1}\mapsto a,\;e\mapsto1.
\]

The kernel $\widehat{\Delta}$ of $\theta$ has no reflections and contains
orientation reversing elements (for instance $c_{1}x_{1}$); so its signature
must be of the form $(h;-;[m_{1},...,m_{s}])$, that is, ${\mathbb{D}%
}/\widehat{\Delta}$ is the connected sum of $h$ real projective planes and
contains exactly $s$ cone points, these having orders $m_{1},\ldots,m_{s}$.
Using the Riemann-Hurwitz formula and the usual methods to compute the
signature of an NEC\ subgroup (see \cite{BEGG}), we have that $h=\gamma$,
$s=r$ and $n_{j}=m_{j}$. So, $\Delta$ is isomorphic to $\widehat{\Delta}$; let
$\iota:\Delta\rightarrow\widehat{\Delta}$ be such an isomorphism. A
fundamental domain for $\widehat{\Delta}$ is shown in Figure \ref{fig:FD2},
this given as the union of the previous fundamental domain for $K$ with its
image under the reflection $\tau_{1}$. By the Poincar\'{e} polygon theorem (or
using the Schreier-Reidemeister method) a presentation of $\widehat{\Delta}$,
in terms of the generators of $K$, may be obtained. We have as generators
\[
\delta_{1}=\tau_{1}x_{1},\ldots,\delta_{\gamma}=\tau_{1}x_{\gamma},c_{1}%
=\tau_{1}\tau_{2},\ldots,c_{r}=\tau_{1}\tau_{r+1},e_{1}=e,e_{2}=\tau_{1}%
e\tau_{1}%
\]
satisfying the following relations
\[
c_{1}^{n_{1}}=1,
\]%
\[
(c_{1}^{-1}c_{2})^{n_{2}}=(c_{2}^{-1}c_{3})^{n_{3}}\cdots=(c_{r-1}^{-1}%
c_{r})^{n_{r}}=1,
\]%
\[
e_{1}e_{2}^{-1}c_{r}=1,
\]%
\[
\left.
\begin{array}
[c]{lll}%
\delta_{1}^{-1}\delta_{2}\delta_{3}^{-1}\cdots\delta_{\gamma-1}^{-1}%
\delta_{\gamma} & = & e_{1}\\
\delta_{1}\delta_{2}^{-1}\delta_{3}\cdots\delta_{\gamma-1}\delta_{\gamma}^{-1}
& = & e_{2}%
\end{array}
\right\}  \;(\mbox{if $\gamma$ is even})
\]%
\[
\left.
\begin{array}
[c]{lll}%
\delta_{1}\delta_{2}^{-1}\delta_{3}\cdots\delta_{\gamma-1}^{-1}\delta_{\gamma}
& = & e_{1}\\
\delta_{1}^{-1}\delta_{2}\delta_{3}^{-1}\cdots\delta_{\gamma-1}\delta_{\gamma
}^{-1} & = & e_{2}%
\end{array}
\right\}  \;(\mbox{if $\gamma$ is odd})
\]

\begin{figure}[ptb]
\centering
\includegraphics[width=6.0cm]{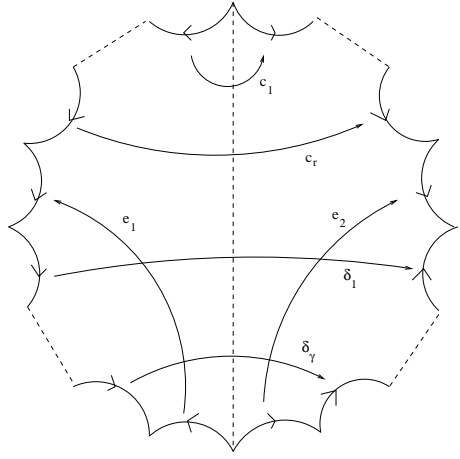}\caption{A fundamental domain for
$\protect\widehat{\Delta}$}%
\label{fig:FD2}%
\end{figure}


\subsection{The final step}

Let us consider the surjective homomorphism $\eta=\rho\circ\iota
^{-1}:\widehat{\Delta} \to C_{2n}$, whose kernel is $\widehat{\Gamma}%
=\iota(\Gamma) $; a torsion free Fuchsian group that uniformizes a closed
Riemann surface ${\widehat{S}}$. In order to finish our proof, we only need to
check that $\widehat{\Gamma}$ is a normal subgroup of $K$. This is what the
following general lemma asserts.

\begin{lemm}
\label{lema1} Let $A$ be an abelian group and let $\zeta:\widehat{\Delta} \to
A$ be a homomorphism. Then $e_{1}e_{2} \in\ker(\zeta)$ and $\ker(\zeta) \lhd K
$.
\end{lemm}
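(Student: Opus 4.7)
The plan is to combine two observations. First, verify that in any abelian quotient of $\widehat{\Delta}$ one has $e_1 e_2 = 1$; this gives the first assertion since $\zeta$ factors through the abelianization. Second, use that $[K:\widehat{\Delta}]=2$ together with an explicit computation of the conjugation action of $\tau_1$ on the generators of $\widehat{\Delta}$ to upgrade the (automatic) normality of $\ker(\zeta)$ in $\widehat{\Delta}$ to normality in the whole of $K$.

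For the first step, I would substitute into the Schreier--Reidemeister presentation displayed in the text. In both parity cases for $\gamma$, the long relation expresses $e_1$ as an alternating product $\delta_1^{\pm 1}\delta_2^{\mp 1}\cdots\delta_\gamma^{\pm 1}$ and $e_2$ as the same product with every exponent flipped. In an abelian quotient the factors $\zeta(\delta_i)$ and $\zeta(\delta_i)^{-1}$ cancel pairwise, giving $\zeta(e_1)\zeta(e_2)=1$, i.e.\ $e_1e_2\in\ker(\zeta)$.

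For the second step, since $\theta(\tau_1)=a\neq 1$ we have $\tau_1\notin\widehat{\Delta}$ and thus $K=\widehat{\Delta}\sqcup\tau_1\widehat{\Delta}$. So it suffices to show that the inner automorphism $\phi(g)=\tau_1 g\tau_1$ of $\widehat{\Delta}$ preserves $\ker(\zeta)$. Using only the order-two relations $\tau_1^2=x_i^2=\tau_j^2=1$, direct expansion yields
\[
\phi(\delta_i)=\delta_i^{-1},\qquad \phi(c_j)=c_j^{-1},\qquad \phi(e_1)=e_2,\qquad \phi(e_2)=e_1.
\]
Combining this with the first step, which gives $\zeta(e_2)=\zeta(e_1)^{-1}$, the homomorphism $\zeta\circ\phi$ and the inversion homomorphism $g\mapsto\zeta(g)^{-1}$ agree on every generator of $\widehat{\Delta}$, hence coincide on all of $\widehat{\Delta}$. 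In particular $\ker(\zeta\circ\phi)=\ker(\zeta)$, so $\phi$ preserves $\ker(\zeta)$ and $\ker(\zeta)\lhd K$.

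The only real bookkeeping obstacle is treating the two parities of $\gamma$ separately when reading off the abelianized relation for $e_1 e_2$; no deeper issue arises, since all of the conjugation formulas drop out of the order-two relations on $x_i$, $\tau_j$, and $\tau_1$. The conceptual point is that the action of $\tau_1$ on $\widehat{\Delta}^{\mathrm{ab}}$ is by inversion, which shares its kernel with the identity, and this is exactly what makes the lemma work for arbitrary abelian targets $A$ (and in particular for the cyclic target $C_{2n}$ used to construct $\widehat{\Gamma}$).
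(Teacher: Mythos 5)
Your proposal is correct and follows essentially the same route as the paper: derive $\zeta(e_1)=\zeta(e_2)^{-1}$ from the long relations, compute that conjugation by $\tau_1$ inverts the generators $\delta_i$ and $c_j$ and swaps $e_1$ with $e_2$, and conclude that $\tau_1$ acts on the image in $A$ by inversion, hence preserves $\ker(\zeta)$. Your phrasing of the last step as an equality of the two homomorphisms $\zeta\circ\phi$ and $g\mapsto\zeta(g)^{-1}$ checked on generators is just a slightly more explicit packaging of the paper's statement that $\tau_1$ induces the inverse automorphism of $A$.
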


\begin{proof}
We assume $\gamma$ even; the odd case is similar. The relations
\[
\delta_{1}^{-1}\delta_{2}\delta_{3}^{-1}\cdots\delta_{\gamma-1}^{-1}%
\delta_{\gamma}=e_{1},\;\mbox{ and }\;\delta_{1}\delta_{2}^{-1}\delta
_{3}\cdots\delta_{\gamma-1}\delta_{\gamma}^{-1}=e_{2},
\]
assert that $\zeta(e_{1})=\zeta(e_{2})^{-1}$, so, $\zeta(e_{1}e_{2})=1$. Next,
since
\[
\tau_{1}\delta_{j}\tau_{1}=\delta_{j}^{-1},j=1,\ldots,\gamma,
\]%
\[
\tau_{1}c_{k}\tau_{1}=c_{k}^{-1},k=1,\ldots,r,
\]%
\[
\zeta(e_{1})=\zeta(e_{2})^{-1},
\]%
\[
\widehat{\Delta}^{\prime}\lhd\ker(\zeta)\;\mbox{(as $A$ is an abelian
group)},
\]
we may see that $\tau_{1}$ induces the inverse automorphism of $A$, i.e.,
\[
a\in A\mapsto a^{-1}\in A.
\]

In particular, $\tau_{1}\ker(\zeta)\tau_{1}=\ker(\zeta)$. Since,
$K=\langle\widehat{\Delta},\tau_{1}\rangle$, we obtain that $\ker(\zeta)\lhd
K$.
\end{proof}

\begin{coro}
If $S$ is a pseudo-real Riemann surface admitting an anticonformal
automorphism $\tau$ of order $2n$, then there is a real Riemann surface
$\widehat{S}$ admitting an anticonformal automorphism $\widehat{\tau}$ such
that $(\widehat{S},\widehat{\tau})$ is topologically conjugate to $(S,\tau) $.
\end{coro}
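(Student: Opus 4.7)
The plan is to recognize that this corollary is essentially the packaging of the construction already carried out in the proof of Theorem \ref{conexo}, together with Lemma \ref{lema1}. I would start by noting that since $S$ is pseudo-real with $\tau$ of order $2n$, the integer $n$ must be even: otherwise $\tau^{n}$ would be an anticonformal involution, contradicting the pseudo-reality of $S$. Consequently, we are precisely in the hypotheses under which the NEC group $K$ of signature $(0;+;[2,\overset{\gamma}{\ldots},2],\{(n_{1},\ldots,n_{r})\})$ was constructed in Section 2.1.

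Next, I would take the subgroup $\widehat{\Delta}$ of $K$ and the isomorphism $\iota:\Delta\to\widehat{\Delta}$ provided by Section 2.2, and form the epimorphism $\eta=\rho\circ\iota^{-1}:\widehat{\Delta}\to C_{2n}$. Its kernel $\widehat{\Gamma}=\iota(\Gamma)$ is a torsion-free Fuchsian group, and since $C_{2n}$ is abelian, Lemma \ref{lema1} applies directly to give $\widehat{\Gamma}\triangleleft K$. I would then set $\widehat{S}=\mathbb{D}/\widehat{\Gamma}$.

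From this, two structures emerge on $\widehat{S}$. On one hand, the quotient $\widehat{\Delta}/\widehat{\Gamma}\cong C_{2n}$ acts on $\widehat{S}$, giving an anticonformal automorphism $\widehat{\tau}$ of order $2n$. Because $\iota$ is an isomorphism of NEC groups carrying the canonical generators of $\Delta$ to those of $\widehat{\Delta}$ in a signature-preserving manner, the actions of $\langle\tau\rangle$ on $S$ and $\langle\widehat{\tau}\rangle$ on $\widehat{S}$ are conjugate by an orientation-preserving homeomorphism, giving the topological equivalence of $(\widehat{S},\widehat{\tau})$ and $(S,\tau)$. On the other hand, since $\widehat{\Gamma}\triangleleft K$ and $K$ contains reflections (for instance $\tau_{1}$), these reflections descend to anticonformal involutions of $\widehat{S}$, so $\widehat{S}\in\mathcal{M}_{g}(\mathbb{R})$.

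The only delicate point, which is really the crux of Theorem \ref{conexo} itself, is the normality $\widehat{\Gamma}\triangleleft K$; this is exactly what Lemma \ref{lema1} is tailored to provide, so the corollary then follows immediately by assembling the above observations. No additional computation is required beyond quoting the construction and the lemma.
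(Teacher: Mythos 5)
Your proposal is correct and follows essentially the same route as the paper: the authors' proof of the corollary is literally to take $\widehat{S}={\mathbb{D}}/\ker(\eta)$ and apply Lemma \ref{lema1} with $\zeta=\eta$ and $A=C_{2n}$, which is exactly what you do, with the construction of $K$, $\widehat{\Delta}$ and $\iota$ quoted from the preceding subsections. Your added observation that pseudo-reality forces $n$ to be even (so the standing hypotheses of the construction are indeed met) is a small but welcome clarification that the paper leaves implicit.
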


\begin{proof}
The surface $\widehat{S}$ is uniformized by $\ker(\eta)$ and we use
$\eta=\zeta$, $A=C_{2n}$, in the above lemma.
\end{proof}

Note that if $\widehat{S}$ is the surface given in the above Corollary
$D_{2n}\leq\mathrm{Aut}(\widehat{S})$.

For every pseudo-real Riemann surface $S$, Corollary 1 implies ${\mathcal{O}%
}(S,\tau)\cap{\mathcal{M}}_{g}(\mathbb{R})\neq\varnothing$ and that
$\mathrm{Fix}(\sigma)$ is connected.


\end{document}